\newtheorem{theorem}{Theorem}
\newtheorem{assumption}[theorem]{Assumption}
\newtheorem{definition}[theorem]{Definition}
\newtheorem{remark}[theorem]{Remark}
\newtheorem{thm}{Theorem}
\DeclareMathOperator*{\infm}{inf}
\DeclareMathOperator*{\minimize}{minimize}
\DeclareMathOperator*{\subjt}{subject \ to}
\DeclareMathOperator*{\sgn}{sign}
\newcommand{\bi}{\begin{itemize}}
\newcommand{\ei}{\end{itemize}}
\newcommand{\bd}{\begin{displaymath}}
\newcommand{\ed}{\end{displaymath}}
\newcommand{\be}{\begin{eqnarray*}}
\newcommand{\ee}{\end{eqnarray*}}
\title{Feedback Stabilization Using Koopman Operator}
\author{Bowen Huang, Xu Ma, and Umesh Vaidya \\
\thanks{Financial support from the National Science Foundation grants CNS-1329915 and ECCS-1150405 is gratefully acknowledged. The authors are with the Department of Electrical and Computer Engineering at Iowa State University. 
{\tt\small Emails:\{bowen, maxu, ugvaidya\}@iastate.edu}. }
}
\begin{document}
\maketitle

\begin{abstract}
In this paper, we provide a systematic approach for the design of stabilizing feedback controllers for nonlinear control systems using the Koopman operator framework. The Koopman operator approach provides a linear representation for a nonlinear dynamical system and a bilinear representation for a nonlinear control system. The problem of feedback stabilization of a nonlinear control system is then transformed to the stabilization of a bilinear control system. We propose a control Lyapunov function (CLF)-based approach for the design of stabilizing feedback controllers for the bilinear system. The search for finding a CLF for the bilinear control system is formulated as a convex optimization problem. This leads to a schematic procedure for designing CLF-based stabilizing feedback controllers for the bilinear system and hence the original nonlinear system. Another advantage of the proposed controller design approach outlined in this paper is that it does not require explicit knowledge of system dynamics. In particular, the bilinear representation of a nonlinear control system in the Koopman eigenfunction space can be obtained from time-series data. Simulation results are presented to verify the main results on the design of stabilizing feedback controllers and the data-driven aspect of the proposed approach.     
\end{abstract}

\section{Introduction}
Providing a systematic procedure for the design of stabilizing feedback control for a general nonlinear system will have a significant impact on a variety of application domains. The lack of proper structure for a general nonlinear system makes this design problem challenging. There have been several attempts to provide such a systematic approach, including convex optimization-based Sum of Square (SoS) programming \cite{SOS_book, Parrilothesis} and differential geometric-based feedback linearization control \cite{sastry2013nonlinear,astolfi2015feedback}. The introduction of operator theoretic methods from the ergodic theory of dynamical systems provides another opportunity for the development of systematic methods for the design of feedback controllers \cite{Lasota}. The operator theoretic methods provide a linear representation for a nonlinear dynamical system. This linear representation of the nonlinear system is made possible by shifting focus from state space to space of functions using two linear and dual operators, namely, the Perron-Frobenius (P-F) and Koopman operators. The work involving the third author \cite{VaidyaMehtaTAC, Vaidya_CLM,raghunathan2014optimal} was the first to propose a systematic linear programming-based approach involving transfer P-F operator for the optimal control of nonlinear systems. This contribution was made possible by exploiting not only the linearity but also the positivity and Markov properties of the P-F operator. 

More recently, there has been increased research activity on the use of Koopman operator for the analysis and control of nonlinear systems \cite{Meic_model_reduction,mezic_koopmanism,susuki2011nonlinear,kaiser2017data,surana_observer,peitz2017koopman,mauroy2016global}. This recent work is mainly driven by the ability to approximate the spectrum (i.e., eigenvalues and eigenfunctions) of the Koopman operator from time-series data \cite{rowley2009spectral,DMD_schmitt, EDMD_williams, Umesh_NSDMD}. The data-driven approach for computing the spectrum of the Koopman operator is attractive as it opens up the possibility of employing operator theoretic methods for data-driven control. In fact, research works in \cite{kaiser2017data,peitz2017koopman,MPC1,MPC2} are proposing to develop Koopman operator-based data-driven methods for the design of optimal control and model predictive control for nonlinear and partial differential equations as well. However, the eigenfunctions of the Koopman operator provide only a bilinear representation for the nonlinear control system, and the control of a bilinear system remains a challenging problem. This is in contrast to the linear programming-based framework provided for the optimal control of nonlinear systems using transfer P-F operator in \cite{raghunathan2014optimal,apurba_cdc2018}, where not only linearity but positivity and Markov property of the P-F operator were exploited.

In this paper, we study the more basic problem of designing stabilizing feedback control for a nonlinear system. We use the control Lyapunov function (CLF) approach from nonlinear system theory for the design of stabilizing feedback control \cite{Khalil_book}. While the search for CLFs for a general nonlinear system is a difficult problem, we use a bilinear representation of a nonlinear control system in the Koopman eigenfunction space to search for a CLF for the bilinear system. By restricting the search of CLFs to a class of quadratic Lyapunov functions, we provide a convex programming-based approach for determining the CLF \cite{Boyd_book}. The existence of a CLF provides multiple choices for stabilizing feedback controllers. Simulation results are presented using two different feedback control inputs obtained using the CLF. Another contribution of this paper is in the use of time-series data to obtain a bilinear representation of the nonlinear control system. Hence the approach outlined in this paper can be viewed as a data-driven approach for the design of stabilizing feedback control. 


\section{Preliminaries}\label{section_prelim}

In this section, we present some preliminaries on the Koopman operator and control Lyapunov function-based approach on the design of stabilizing feedback controllers for nonlinear systems. 
\subsection{Koopman Operator}
Consider a continuous-time dynamical system of the form 
\begin{eqnarray}
\dot x=f(x)\label{system}
\end{eqnarray}
where $x\in X\subset \mathbb{R}^n$ and the vector field $f$ is assumed to be continuously differentiable. Let $\phi_t(x)$ be the solution of the system (\ref{system}) starting from initial condition $x$ and at time $t$. Let $\cal O$ be the space of all observables $\varphi: X\to \mathbb{C}$. 
\begin{definition}[Koopman operator] The Koopman semigroup of operators $U_t :{\cal O}\to {\cal O}$ associated with system (\ref{system}) is defined by
\end{definition}
\begin{eqnarray}
[U_t \varphi](x)=\varphi(\phi_t(x)).
\end{eqnarray}
It is easy to observe that the Koopman operator is linear on the space of observables although the underlying dynamical system is nonlinear.  In particular, we have
\[[U_t (\alpha \varphi_1 +\varphi_2)](x)=\alpha [U_t  \varphi_1](x)+[U_t \varphi_2](x).\]

Under the assumption that the function $\varphi$ is continuously differentiable, the semigroup $[U_t \varphi](x)=\rho(x,t)$ can be obtained as the solution of the following partial differential equation
\[\frac{\partial \rho}{\partial t}=f\cdot \nabla \rho=: A_K \rho\]
with initial condition $\rho(x,0)=\varphi(x)$. From the semigroup theory it is known \cite{Lasota} that the operator $A_K$ is the infinitesimal generator for the Koopman operator, i.e., 
\[A_K \rho=\lim_{t\to 0}\frac{U_t \rho-\rho}{t}.\]

The linear nature of Koopman operator allows us to define the eigenfunctions and eigenvalues of this operator as follows.
\begin{definition}[Koopman eigenfunctions] The eigenfunction of Koopman operator is a function $\psi_\lambda\in {\cal O}$ that satisfies
\begin{eqnarray}
[U_t \psi_\lambda ](x)=e^{\lambda t} \psi_\lambda(x)
\end{eqnarray}
for some $\lambda\in \mathbb{C}$. The $\lambda$ is the associated eigenvalue of the Koopman eigenfunction and is assumed to belong to the point spectrum. 
\end{definition}

The spectrum of the Koopman operator is far more complex than simple point spectrum and could include continuous spectrum \cite{Meic_model_reduction}. The eigenfunctions can also be expressed in terms of the infinitesimal generator of the Koopman operator $A_K$ as follows,
$A_K \psi_\lambda =\lambda \psi_\lambda$. The eigenfunctions of Koopman operator corresponding to the point spectrum are smooth functions and can be used as coordinates for linear representation of nonlinear systems.  

\subsection{Feedback Stabilization and Control Lyapunov Functions}
For the simplicity of presentation, we will consider only the case of single input in this paper. All the results carry over to the multi-input case in a straightforward manner. Consider a single input control affine system of the form
\begin{align}
\label{non_lin_sys}
\dot x = f(x)+g(x)u, 
\end{align}
where $x(t) \in \mathbb{R}^n$ denotes the state of the system, $u(t) \in \mathbb{R}$ denotes the single input of the system, and $f, g: \mathbb{R}^n \rightarrow \mathbb{R}^n$ are assumed to be continuously differentiable mappings. We assume that $f(0) = 0$ and the origin is an unstable equilibrium point of the uncontrolled system $\dot x=f(x)$.

The \textit{state feedback stabilization} problem associated with system (\ref{non_lin_sys}) seeks a possible feedback control law
\begin{align*}
u = k(x)
\end{align*}
with $k: \mathbb{R}^n \rightarrow \mathbb{R}$ such that $x = 0$ is asymptotically stable within some domain $\mathcal{D} \subset \mathbb{R}^n$ for the closed-loop system
\begin{align}
\dot x = f(x)+g(x)k(x). \label{closed_loop}
\end{align}

One of the possible approaches for the design of stabilizing feedback controllers for the nonlinear system (\ref{non_lin_sys}) is via control Lyapunov functions that are defined as follows.

\begin{definition}
Let $\mathcal{D} \subset \mathbb{R}^n$ be a neighborhood that contains the equilibrium $x = 0$. A \textit{control Lyapunov function} (CLF) is a continuously differentiable positive definite function $V: \mathcal{D} \rightarrow \mathbb{R}_+$ such that for all $x \in \mathcal{D} \setminus \{0\}$ we have
\end{definition}
\begin{align*}
& \infm_u \ \Big[ V_xf(x) + V_xg(x) u \Big] \\
:= \ & \infm_u \ \left[ \frac{\partial V}{\partial x} \cdot f(x) + \frac{\partial V}{\partial x} \cdot g(x)u \right] < 0.
\end{align*}

It has been shown in \cite{artstein1983stabilization, sontag1989universal} that the existence of a CLF for system (\ref{non_lin_sys}) is equivalent to the existence of a stabilizing control law $u = k(x)$ which is almost smooth everywhere except possibly at the origin $x = 0$.

\begin{thm} [see \cite{astolfi2015feedback}, Theorem 2] \label{clf_thm}
There exists an almost smooth feedback $u = k(x)$, i.e., $k$ is continuously differentiable for all $x \in \mathbb{R}^n \setminus \{0\}$ and continuous at $x = 0$, which globally asymptotically stabilizes the equilibrium $x = 0$ for system (\ref{non_lin_sys}) if and only if there exists a radially unbounded CLF $V(x)$ such that
\begin{enumerate}
\item For all $x \neq 0$, $V_xg(x) = 0$ implies $V_xf(x) < 0$;
\item For each $\varepsilon > 0$, there is a $\delta > 0$ such that $\|x\| < \delta$ implies the existence of a $|u| < \varepsilon$ satisfying $V_xf(x) + V_xg(x) u < 0$.
\end{enumerate}
\end{thm}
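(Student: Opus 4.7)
The plan is to prove the equivalence in two directions: the necessity direction relies on a converse Lyapunov argument, and the sufficiency direction is handled by exhibiting an explicit feedback of Sontag-universal-formula type. Throughout, write $a(x) := V_x f(x)$ and $b(x) := V_x g(x)$ so that condition (1) reads ``$b(x)=0$ and $x\neq 0$ implies $a(x)<0$,'' and condition (2) is the standard small-control property.

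For the necessity direction, assume an almost smooth $k$ globally asymptotically stabilizes the origin. I would invoke a converse Lyapunov theorem (Kurzweil / Massera type) to produce a smooth, positive definite, radially unbounded $V$ with $V_x[f(x)+g(x)k(x)] < 0$ for all $x\neq 0$. Then at any point where $b(x)=V_xg(x)=0$ we have $V_xf(x) = V_xf(x)+V_xg(x)k(x) < 0$, which gives condition (1). Condition (2) follows immediately from continuity of $k$ at the origin with $k(0)=0$ (forced by continuity and $f(0)=0$, GAS): given $\varepsilon>0$, take $\delta>0$ small enough that $|k(x)|<\varepsilon$ on $\|x\|<\delta$, and set $u=k(x)$.

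For the sufficiency direction, I would define Sontag's universal feedback
\begin{align*}
k(x) \;=\; \begin{cases} -\dfrac{a(x)+\sqrt{a(x)^2+b(x)^4}}{b(x)}, & b(x)\neq 0, \\[4pt] 0, & b(x)=0, \end{cases}
\end{align*}
and then verify three properties. First, closed-loop decrease: a direct computation gives
\begin{align*}
V_x\bigl[f(x)+g(x)k(x)\bigr] \;=\; a(x)+b(x)k(x) \;=\; -\sqrt{a(x)^2+b(x)^4},
\end{align*}
when $b(x)\neq 0$, while if $b(x)=0$ and $x\neq 0$ condition (1) gives $a(x)<0$, so $\dot V<0$ on $\R^n\setminus\{0\}$. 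Radial unboundedness of $V$ then yields global asymptotic stability via the standard Lyapunov argument. Second, smoothness away from $0$: on $\{b\neq 0\}$ the formula is manifestly $C^1$; at a point $x_0\neq 0$ with $b(x_0)=0$ we have $a(x_0)<0$ by condition (1), and near such a point $\sqrt{a^2+b^4}=-a\sqrt{1+b^4/a^2}=-a+O(b^4)$, so the numerator of $k$ vanishes to order $b^4$ and the ratio extends smoothly (analytically in $(a,b)$ on the half-plane $a<0$). Third, continuity at $0$: this is exactly where condition (2) is needed — given $\varepsilon>0$, pick $\delta$ from the small-control property so that some $|u_x|<\varepsilon$ achieves $a(x)+b(x)u_x<0$; one then shows Sontag's formula selects a value with $|k(x)|$ no larger than $\varepsilon$ (up to a constant factor that can be absorbed), yielding $k(x)\to 0$ as $x\to 0$.

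The main obstacle I anticipate is the third bullet of the sufficiency direction: verifying continuity of $k$ at the origin from the small-control property. The algebraic bound relating $|k(x)|$ to the existence of \emph{some} admissible small $u_x$ is not immediate from the closed form; the standard trick is to rewrite $k = -a/b - \sqrt{(a/b)^2 + b^2}$ and bound each term using the existence of an admissible $u_x$ with $|u_x|<\varepsilon$ and $a + b u_x < 0$, which forces $a/b$ to lie in a controlled interval near the origin. The smoothness check on the locus $\{b=0,\,a<0\}$ is slightly delicate but follows from the analyticity of $(a,b)\mapsto (a+\sqrt{a^2+b^4})/b$ on $\{a<0\}$; the GAS conclusion and the necessity direction are comparatively routine.
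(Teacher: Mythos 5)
The paper does not prove this statement at all: it is quoted verbatim from the cited reference (Astolfi, Theorem 2), i.e.\ it is the classical Artstein--Sontag theorem, imported as a known result. Your outline is exactly the standard proof of that theorem --- converse Lyapunov (Kurzweil/Massera) for necessity, Sontag's universal formula for sufficiency, with the small-control property used only to get continuity of $k$ at the origin --- and as a sketch it is sound; the decrease computation $a+bk=-\sqrt{a^2+b^4}$, the analytic extension of $(a+\sqrt{a^2+b^4})/b=b^3/(\sqrt{a^2+b^4}-a)$ on $\{a<0\}$, and the bound $|k|\le 2\varepsilon+|b|$ near the origin (splitting the cases $a\le 0$ and $0<a<\varepsilon|b|$) all go through. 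Two caveats are worth flagging. First, in the necessity direction your parenthetical claim that $k(0)=0$ is ``forced by continuity and $f(0)=0$, GAS'' is false when $g(0)=0$ (e.g.\ $\dot x=x+xu$ with $k\equiv-2$ stabilizes but admits no CLF with the small-control property); the condition $k(0)=0$ is part of the definition of an almost smooth feedback in the cited source, and the paper's paraphrase simply omits it --- you should invoke it as a hypothesis rather than derive it. Second, there is a regularity bookkeeping issue: with only a $C^1$ CLF, $a=V_xf$ and $b=V_xg$ are merely continuous, so Sontag's formula yields a feedback continuous (not $C^1$) away from the origin; the $C^1$ claim needs a $C^2$ (or smooth) CLF, which is what the smooth converse Lyapunov function supplies in the other direction and what the original statement tacitly assumes. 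Neither point changes the architecture of your argument.
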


In the theorem above, condition 2) is known as the small control property, and it is necessary to guarantee continuity of the feedback at $x \neq 0$. If both conditions 1) and 2) hold, an almost smooth feedback can be given by the so-called Sontag's formula
\begin{align}
\label{sontag} k(x) := \begin{cases} -\frac{V_xf + \sqrt{(V_xf)^2 + (V_xg)^4}}{V_xg} & \text{if } V_xg(x) \neq 0 \\ 0 & \text{otherwise.} \end{cases}
\end{align}

Besides Sontag's formula, we also have several other possible choices to design a stabilizing feedback control law based on the CLF given in Theorem \ref{clf_thm}. For instance, if we are not constrained to any specifications on the continuity or amplitude of the feedback, we may simply choose
\begin{align}\label{control2}
k(x) &:= -K \sgn\big[ V_xg(x) \big] \\
\label{control1}
k(x) &:= -K V_xg(x)
\end{align}
with some constant gain $K > 0$. Then, differentiating the CLF with respect to time along trajectories of the closed-loop (\ref{closed_loop}) yields
\begin{align*}
\dot V & = V_xf(x) - K \big| V_xg(x) \big| \\
\dot V & = V_xf(x) - K [V_xg(x)]^2.
\end{align*}
Hence, by the stabilizability property of condition 1), there must exist some $K$ large enough such that $\dot V < 0$ for all $x \neq 0$, because whenever $V_xf(x) \geq 0$ we have $V_xg(x) \neq 0$.

On the other hand, the CLFs also enjoy some optimality property using the principle of inverse optimal control. In particular, consider the following optimal control problem
\begin{align}
\minimize_u \quad & \int_0^\infty (q(x) + u^\top u)dt \label{cost} \\
\subjt \quad & \dot x = f(x)+g(x)u \nonumber
\end{align}
for some continuous, positive semidefinite function $q: \mathbb{R}^n \rightarrow \mathbb{R}$. Then the modified Sontag's formula
\begin{align}
\label{mod_sontag} k(x) := \begin{cases} -\frac{V_xf + \sqrt{(V_xf)^2 + q(V_xg)^2}}{V_xg} & \text{if } V_xg(x) \neq 0 \\ 0 & \text{otherwise} \end{cases}
\end{align}
builds a strong connection with the optimal control. In particular, if the CLF has level curves that agree in shape with those of the value function associated with cost (\ref{cost}), then the modified Sontag's formula (\ref{mod_sontag}) will reduce to the optimal controller \cite{freeman1996control, primbs1999nonlinear}.

\section{Nonlinear Stabilization and Koopman Operator}

The control Lyapunov function provides a powerful tool for the design of a stabilizing feedback controller which also enjoys some optimality property using the principle of inverse optimality. However, one of the main challenges is providing a systematic procedure to find CLFs. For a general nonlinear system finding a CLF remains a challenging problem. We propose to exploit the linear nature of Koopman operator towards providing a systematic procedure for computing CLFs for nonlinear systems. The main idea is to first transform a nonlinear control system into a bilinear control system using eigenfunctions of the Koopman operator as coordinates. For the bilinear control system, one can search for a CLF in the class of quadratic Lyapunov functions. The search for quadratic CLFs for bilinear systems can be formulated as an optimization problem.

For the simplicity of presentation we will consider in this paper the case of single input control system (\ref{non_lin_sys}). We transform the system Eq. (\ref{non_lin_sys}) in bilinear form using eigenfunctions of the Koopman operator as coordinates. Towards this goal we let 
\[{\bf \Psi}(x):=[\psi_1(x),\ldots, \psi_N(x)]^\top\]
be the Koopman eigenfunctions with eigenvalues $\lambda_i\in \mathbb{C}$, for $i=1,\ldots, N$, and hence $\psi_i$'s are in general complex-valued functions. Utilizing the technique from \cite{surana2016linear}, we can transform these complex eigenfunctions to real as follows. Define
\[\hat{\bf {\Psi}}(x):=[\hat \psi_1(x),\ldots, \hat \psi_N(x)]^\top\] where $\hat \psi_i:=\psi_i$ if $\psi_i$ is a real-valued eigenfunction and $\hat \psi_i:=2 {\rm Re}(\psi)$, $\hat \psi_{i+1}:=-2{\rm Im}(\psi_i)$, if $i$ and $i+1$ are complex conjugate eigenfunction pairs. Consider now the transformation as $\hat {\bf {\Psi}}: \mathbb{R}^n\to \mathbb{R}^N$ as 
\[z=\hat{\bf {\Psi}}(x).\]
Then in this new coordinates system Eq. (\ref{non_lin_sys}) takes the following form
\begin{eqnarray}
\dot z=A z+\frac{\partial \hat {\bf {\Psi}}}{\partial x} g(x) u.\label{bilinear1}
\end{eqnarray}

We now make the following assumption.
\begin{assumption}\label{assume} We assume that $\frac{\partial \hat {\bf {\Psi}}}{\partial x} g$ lies in the span of $\hat {\bf {\Psi}}$, i.e., there exists a constant matrix $B\in \mathbb{R}^{N\times N}$ such that
\end{assumption}
\vspace{4pt}
\[\frac{\partial \hat {\bf {\Psi}}}{\partial x} g = B\hat {\bf {\Psi}}.\]

\begin{remark}
Generally speaking, whether Assumption \ref{assume} holds or not depends on how functions $\hat \psi_1, \cdots, \hat \psi_N$ and $g$ look like. From our simulation results in Section V, where for all the control system examples we set $g$ as a constant vector and use a series of monomials to express $\hat \psi_1, \cdots, \hat \psi_N$, we observe that Assumption \ref{assume} is always true.
\end{remark}

\begin{remark}
In case that Assumption \ref{assume} fails, numerically we may estimate the $B$ matrix via a least squares optimization that is formulated based on the time-series data of $\hat \psi_1, \cdots, \hat \psi_N$ and $g$. Under these circumstances, we can write $\frac{\partial \hat {\bf {\Psi}}}{\partial x} g = B\hat{\bf{\Psi}} + \Delta$ for some $\Delta \in \mathbb{R}^{N}$ as the estimation error. If we are able to further characterize the quantity of $\|\Delta\|$, then the control of dynamics (\ref{bilinear1}) can be reformulated as a robust control problem. We will leave the study of such a robust control problem for future research.
\end{remark}

Using Assumption \ref{assume}, system Eq. (\ref{bilinear1}) can be written as the following bilinear control system
\begin{eqnarray}\label{z_bilin_sys}
\dot z=Az+u Bz.
\end{eqnarray}

Now that we have transformed the original nonlinear system (\ref{non_lin_sys}) into a bilinear form via the Koopman eigenfunctions as coordinates, the complexity of designing a stabilizing feedback can be significantly reduced.

In particular, due to the bilinear structure of the system (\ref{z_bilin_sys}), one can search for a CLF from a class of quadratic positive definite functions with the form $V(z) = z^TPz$. In the sequel, if there exists a quadratic CLF for the bilinear system (\ref{z_bilin_sys}), then we will say that system (\ref{z_bilin_sys}) is \textit{quadratic stabilizable}.

\begin{thm}
\label{bilin_stb}
System (\ref{z_bilin_sys}) is quadratic stabilizable if and only if there exists an $N \times N$ symmetric positive definite $P$ such that for all non-zero $z \in \mathbb{R}^N$ with $z^\top(PA+A^\top P)z \geq 0$, we have $z^\top(PB+B^\top P)z \neq 0$.
\end{thm}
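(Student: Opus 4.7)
The plan is to recognize that the condition in the theorem is just the contrapositive of the CLF condition specialized to $V(z) = z^\top P z$. By definition, quadratic stabilizability of (\ref{z_bilin_sys}) is the existence of a symmetric positive definite $P$ such that $V$ is a CLF for the bilinear system, so both directions of the equivalence would reduce to checking that this rewrites into the stated algebraic condition.

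First, I would compute the Lie derivative of $V$ along (\ref{z_bilin_sys}). Using $\nabla V(z) = 2Pz$ together with the identity $2 z^\top M z = z^\top(M + M^\top) z$, one obtains
$$V_z(Az + uBz) = z^\top(PA + A^\top P)z + u\, z^\top(PB + B^\top P)z.$$
For fixed $z \neq 0$, the right-hand side is an affine function of $u \in \mathbb{R}$, so its infimum over $u$ is either $-\infty$ (when the coefficient of $u$ is nonzero) or equal to $z^\top(PA + A^\top P)z$ (when the coefficient vanishes). Therefore, the CLF property $\inf_u V_z \dot z < 0$ for every nonzero $z$ is equivalent to the single implication
$$z^\top(PB + B^\top P) z = 0 \;\Longrightarrow\; z^\top(PA + A^\top P) z < 0, \qquad z \neq 0.$$

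Finally, I would observe that the above implication is the logical contrapositive of the condition stated in the theorem, namely that $z^\top(PA + A^\top P) z \geq 0$ forces $z^\top(PB + B^\top P) z \neq 0$. This immediately yields both directions: if such $P$ exists then $V(z)=z^\top P z$ is a quadratic CLF and the system is quadratic stabilizable; conversely, any quadratic CLF provides such a $P$. I do not expect a substantive obstacle here, since the argument is essentially bookkeeping. The one place to be careful is the case split in the infimum analysis, which relies on the affine dependence of the bilinear drift on $u$ and on the unbounded range of $u$; once that dichotomy is established, the contrapositive step is a one-liner.
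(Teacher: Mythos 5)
Your proposal is correct and follows essentially the same route as the paper: both compute $\dot V = z^\top(PA+A^\top P)z + u\,z^\top(PB+B^\top P)z$ for $V(z)=z^\top Pz$ and observe that, by affinity in $u$, the CLF condition holds exactly when the vanishing of the control coefficient forces the drift term to be negative, which is the contrapositive of the stated condition. The only cosmetic difference is that the paper presents this as separate sufficiency and necessity-by-contradiction arguments, whereas you package it as a single infimum dichotomy.
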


\begin{proof}
Sufficiency $(\Leftarrow)$: Suppose there is a symmetric, positive definite $P$ that satisfies the condition of Theorem \ref{bilin_stb}. We can use it to construct $V(z) = z^\top Pz$ as our Lyapunov candidate function, and the derivative of $V$ with respect to time along trajectories of (\ref{z_bilin_sys}) is given by
\begin{align*}
\dot V & = z^\top P\dot z + \dot z^\top Pz \nonumber \\
& = z^\top(PA+A^\top P)z + uz^\top(PB+B^\top P)z.
\end{align*}
Since for all $z \neq 0$ we have $z^\top(PB+B^\top P)z \neq 0$ when $z^\top(PA+A^\top P)z \geq 0$, we can always find a control input $u(z)$ such that
\begin{align*}
\dot V < 0, \quad \forall z \in \mathbb{R}^N \setminus \{0\}.
\end{align*}
Therefore, $V(z)$ is indeed a CLF for system (\ref{z_bilin_sys}).

Necessity ($\Rightarrow$): We will prove this by contradiction. Suppose that system (\ref{z_bilin_sys}) has a CLF in the form of $V(z) = z^\top Pz$, where $P$ does not satisfy the condition of Theorem \ref{bilin_stb}. That is, there exists some $\bar z \neq 0$ such that ${\bar z}^\top(PA+A^\top P)\bar z \geq 0$ but ${\bar z}^\top(PB+B^\top P)\bar z = 0$. In this case, we have
\begin{align*}
\dot V(\bar z) = {\bar z}^\top(PA+A^\top P)\bar z \geq 0
\end{align*}
for any input $u$, which contradicts the definition of a CLF. This completes the proof.
\end{proof}

Below we formulate a convex optimization problem that attempts to obtain a positive definite $P$ satisfying the condition of Theorem \ref{bilin_stb}, which is
\begin{eqnarray}
\label{opt}
\minimize_{t > 0, \ P = P^T} & \quad t - \gamma{\rm Trace}(P B) \nonumber \\
\subjt & \quad tI - (PA+A^\top P) \succeq 0 \nonumber \\
& \quad c^\text{max} I \succeq P \succeq c^\text{min} I
\end{eqnarray}
where $c^\text{max} > c^\text{min} > 0$, respectively, are two given positive scalars forming bounds for the largest and the least eigenvalues of $P$. The variable $t$ here represents an epigraph form for the largest eigenvalue of $PA+A^\top P$.

Optimization (\ref{opt}) has combined two objectives. On the one hand, we minimize the largest eigenvalue of $PA+A^\top P$. On the other hand, we try to maximize the least singular value of $PB+B^\top P$ the same time. Noticing that it may be difficult to maximize the least singular value of $PB+B^\top P$ directly, we maximize the trace of $PB$ instead and employ a parameter $\gamma > 0$ to balance these two objectives.

\begin{remark}
When an optimal $P^\star$ is solved from (\ref{opt}), we still need to check whether it satisfies the condition of Theorem \ref{bilin_stb} or not. So if one $P^\star$ fails the condition check, then we may tune the parameter $\gamma$ and solve the above optimization again until we obtain a correct $P^\star$. Nevertheless, we observe from simulations (see the multiple examples in our simulation section) that when we choose a $\gamma = 2$, optimization (\ref{opt}) will always yield an optimal $P^\star$ that satisfies the condition of Theorem \ref{bilin_stb}. 
\end{remark}

\begin{remark}
We also need to point out that, compared to searching for a nonlinear CLF for the original nonlinear system (\ref{non_lin_sys}), the procedure for seeking a quadratic CLF for bilinear system (\ref{z_bilin_sys}) becomes quite easier and more systematic. Furthermore, a quadratic CLF for the bilinear system is, in fact, non-quadratic (i.e., contains higher order nonlinear terms) for system (\ref{non_lin_sys}).
\end{remark}

Once a quadratic control Lyapunov function $V(z) = z^\top Pz$ is found for bilinear system (\ref{z_bilin_sys}), we have several choices for designing a stabilizing feedback control law. For instance, applying the control law (\ref{control2}) or (\ref{control1}) we can construct
\begin{align*}
k(z) &= -\beta \sgn\big[ z^\top(PB+B^\top P)z \big] \\
k(z) &= -\beta z^\top(PB+B^\top P)z.
\end{align*}
Moreover, given a positive semidefinite cost $q(z) \geq 0$, we may also apply the inverse optimality property to design an optimal control via Sontag's formula (\ref{mod_sontag}).

\section{Data-driven approximation of Koopman eigenfunctions}
In this paper, we will use Extended Dynamic Mode Decomposition (EDMD) algorithm for the approximation of Koopman eigenfunctions \cite{EDMD_williams}. Given the continuous time system, $\dot x=f(x)$, one can generate the time-series data from the simulation or the experiment as follows
\begin{eqnarray}
\overline X = [x_1,x_2,\ldots,x_M],&\overline Y = [y_1,y_2,\ldots,y_M] \label{data}
\end{eqnarray}
where $x_i\in X$ and $y_i=T(x_i) = f(x_i)\Delta t+x_i\in X$. Now let $\mathcal{H}=
\{h_1,h_2,\ldots,h_N\}$ be the set of dictionary functions or observables. The dictionary functions are assumed to belong to $h_i\in L_2(X,{\cal B},\mu)={\cal G}$, where $\mu$ is some positive measure, not necessarily the invariant measure of $T$. Let ${\cal G}_{\cal H}$ denote the span of ${\cal H}$ such that ${\cal G}_{\cal H}\subset {\cal G}$. The choice of dictionary functions is very crucial and it should be rich enough to approximate the leading eigenfunctions of the Koopman operator. Define vector-valued function $\mathbf{H}:X\to \mathbb{C}^{N}$
\begin{equation}
\mathbf{H}(\boldsymbol{x}):=\begin{bmatrix}h_1(\boldsymbol{x}) & h_2(\boldsymbol{x}) & \cdots & h_N(\boldsymbol{x})\end{bmatrix}^\top.
\end{equation}
In this application, $\mathbf{H}$ is the mapping from state space to function space. Any two functions $\phi$ and $\hat{\phi}\in \mathcal{G}_{\cal H}$ can be written as
\begin{eqnarray}
\phi = \sum_{k=1}^N a_kh_k=\boldsymbol{H^\top a},\quad \hat{\phi} = \sum_{k=1}^N \hat{a}_kh_k=\boldsymbol{H^\top \hat{a}}
\end{eqnarray}
for some coefficients $\boldsymbol{a}$ and $\boldsymbol{\hat{a}}\in \mathbb{C}^N$. Let \[ \hat{\phi}(\boldsymbol{x})=[U_{\Delta t}\phi](\boldsymbol{x})+r\]
where $r\in\mathcal{G}$ is a residual function that appears because $\mathcal{G}_{\cal H}$ is not necessarily invariant to the action of the Koopman operator. To find the optimal mapping which can minimize this residual, let $\bf K$ be the finite dimensional approximation of the Koopman operator $U_{\Delta t}$. Then the  matrix $\bf K$ is obtained as a solution of least square problem as follows 
\begin{equation}\label{edmd_op}
\minimize_{\bf K} \quad \|{\bf G}{\bf K}-{\bf A}\|_F
\end{equation}
where
\begin{eqnarray}\label{edmd1}
{\bf G}=\frac{1}{M}\sum_{m=1}^M \boldsymbol{H}({x}_m)^\top \boldsymbol{H}({x}_m)\nonumber\\
{\bf A}=\frac{1}{M}\sum_{m=1}^M \boldsymbol{H}({x}_m)^\top \boldsymbol{H}({y}_m)
\end{eqnarray}
with ${\bf K},{\bf G},{\bf A}\in\mathbb{C}^{N\times N}$. The optimization problem (\ref{edmd_op}) can be solved explicitly with a solution in the following form
\begin{eqnarray}
{\bf K}_{EDMD}={\bf G}^\dagger {\bf A}\label{EDMD_formula}
\end{eqnarray}
where ${\bf G}^{\dagger}$ denotes the psedoinverse of matrix $\bf G$.
Under the assumption that the leading Koopman eigenfunctions are contained within $\mathcal{G}_{\mathcal{H}}$, the eigenvalues of $\bf K$ are approximations of the Koopman eigenvalues. The right eigenvectors of $\bf K$ can be used then to generate the approximation of Koopman eigenfunctions. In particular, the approximation of Koopman eigenfunction is given by
\begin{equation}\label{EDMD_eigfunc_formula}
\psi_j=\boldsymbol{H} ^\top v_j, \quad j = 1,\ldots,N
\end{equation}
where $v_j$ is the $j$-th right eigenvector of $\bf K$, and $\psi_j$ is the approximation of the eigenfunction of Koopman operator corresponding to the $j$-th eigenvalue. 

For the simulation examples in this paper, we choose the monomials of most degree $D$ as the Koopman dictionary functions. In bilinear system (\ref{z_bilin_sys}), $A\in\mathbb{R}^{N\times N}$ can be written as a block diagonal matrix of Koopman eigenvalues $\lambda_1,\lambda_2,\ldots,\lambda_N$ such that $A_{(i,i)} =\lambda_i$ if $\psi_i$ is real, and 
\begin{align*}
\begin{bmatrix}A_{(i,i)}&A_{(i,i+1)}\\ A_{(i+1,i)}&A_{(i+1,i+1)}\end{bmatrix} =\lvert\lambda_i\rvert\begin{bmatrix}
\cos(\angle\lambda_i)&\sin(\angle\lambda_i)\\-\sin(\angle\lambda_i)&\cos(\angle\lambda_i)
\end{bmatrix}
\end{align*}
if $\psi_i$ and $\psi_{i+1}$ are complex conjugate pairs. Since the real-valued Koopman eigenfunctions satisfy $\boldsymbol{\hat\Psi}(\boldsymbol{x})=V^\top \boldsymbol{H}(\boldsymbol{x})$, we can approximate $B\in\mathbb{R}^{N\times N}$ as
\begin{eqnarray*}
\frac{\partial\boldsymbol{\hat\Psi}}{\partial x}g(x)&=&V^\top\frac{\partial\boldsymbol{H}}{\partial x}g(x)\\
&=&B\boldsymbol{\hat \Psi}(\boldsymbol{x})=BV^\top \boldsymbol{H}(\boldsymbol{x})=\tilde{B}\boldsymbol{H}(\boldsymbol{x}).
\end{eqnarray*}
Since $\frac{\partial{\boldsymbol H}}{\partial x}$ lies in the span of monomial basis functions $\boldsymbol{H}(\boldsymbol{x})$, and  the eigenvector matrix $V$ is invertible, the coefficient matrix $\tilde{B}$ can be found exactly, then $B=\tilde{B}(V^\top)^{-1}$.
 Once $A$ and $B$ matrices are obtained, the controller design problem can be resolved by solving the optimal $P$ satisfying optimization (\ref{opt}).

\section{Simulation results}
In this section, we will present the simulation results for stabilization and optimal control with a variety of unstable continuous-time dynamical systems.




\underline{\it Simple pendulum oscillator}

Consider a controlled 2D pendulum oscillator system given as follows
\begin{eqnarray}\label{linear_sys}
\dot{x}_1 &=& x_2\\\nonumber
\dot{x}_2 &=& 0.01x_2 - sin(x_1)+u
\end{eqnarray}
where $x=[\theta\;\dot\theta]\in\mathbb{R}^2$ and $u\in\mathbb{R}$ is the single input. The nonlinear system without control has a unique unstable equilibrium point at the origin. In this example, we will use the control input  defined in the formula (\ref{control1}) with $K = 10$. The dictionary function $\boldsymbol{H}(\boldsymbol{x})$ is chosen as monomials of most degree $D=5$ (i.e., 21 monomials).

For the comparison with the standard approaches, the classic LQR controller is chosen to stabilize the linearized pendulum system at the origin point. By choosing $Q = \begin{bmatrix} 1&0\\0&0\end{bmatrix}$ and $R = 1$, the optimal $K$ matrix is obtained as, $K = [0.4142\;0.9202]$. 
For the closed-loop simulation, we randomly choose initial points within $[-1,1]\times[-1,1]$ and solve the closed-loop system with \texttt{ode45} solver in \textbf{MATLAB}. Both LQR controller and our controller are applied to the nonlinear pendulum system based on the feedback control $u=-Kx$.

In Fig.~\ref{fig:A-xy}, the closed-loop trajectory with the LQR controller is converging to the origin in 8 seconds, while the closed-loop trajectory with the data-driven designed controller (\ref{control1}) arrives at the origin within 4 seconds. The controller designed by our approach also performs a shorter controlled path to the origin as shown in Fig.~\ref{fig:A-xy}.

\underline{\it Van der Pol oscillator}

The Van der Pol oscillator is described by the following equations
\begin{eqnarray}\label{vanderpol_sys}
\dot{x}_1 &=& x_2\\\nonumber
\dot{x}_2 &=& (1-x_1^2)x_2-x_1+u
\end{eqnarray}
where $x\in\mathbb{R}^2$ and $u\in\mathbb{R}$ is the single input. With $u=0$, the vector field of the Van der Pol oscillator has a limit cycle and an unstable equilibrium point at the origin. In this example, we will apply different types of control $u$ based on the CLF.

For the approximation of Koopman operator and eigenfunctions, we are using the time-series data with $T_{final} = 10$, $\Delta t= 0.0001$ (i.e., $10^5$ time-series data samples). In this example, we are using  the monomials of most degree $D=5$ (i.e., 21 monomials) as the dictionary functions $\boldsymbol H(\boldsymbol x)$.

For the closed-loop simulation, we randomly choose the initial points within $[-3,3]\times[-4,4]$ and solve the closed-loop system with \texttt{ode45} solver in \textbf{MATLAB}. By using the controller $1$ defined in equation (\ref{control1}) with $K=10$, the  closed-loop trajectory is stabilized to the origin within $10$ secs, while the open-loop trajectory from the same initial condition converges to the limit cycle, as shown in the Fig.~\ref{fig:B-xy}.

Without change of initial conditions, the closed-loop simulation applied with controller $2$ defined in the formula (\ref{mod_sontag}) is shown in Fig.~\ref{fig:B-xy2}. The closed-loop trajectory converges to the origin within 1 second. 

\begin{figure}[tbp]
\centering
\includegraphics[width=\linewidth]{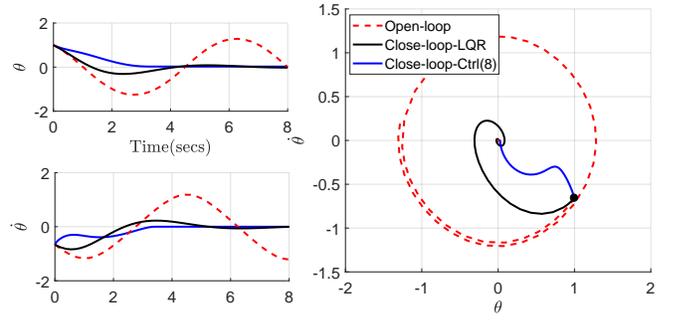}
\caption{Closed-loop and open-loop trajectories for the 2D pendulum system}
 \label{fig:A-xy}
\end{figure}

\begin{figure}[tbp]
\centering
\includegraphics[width=\linewidth]{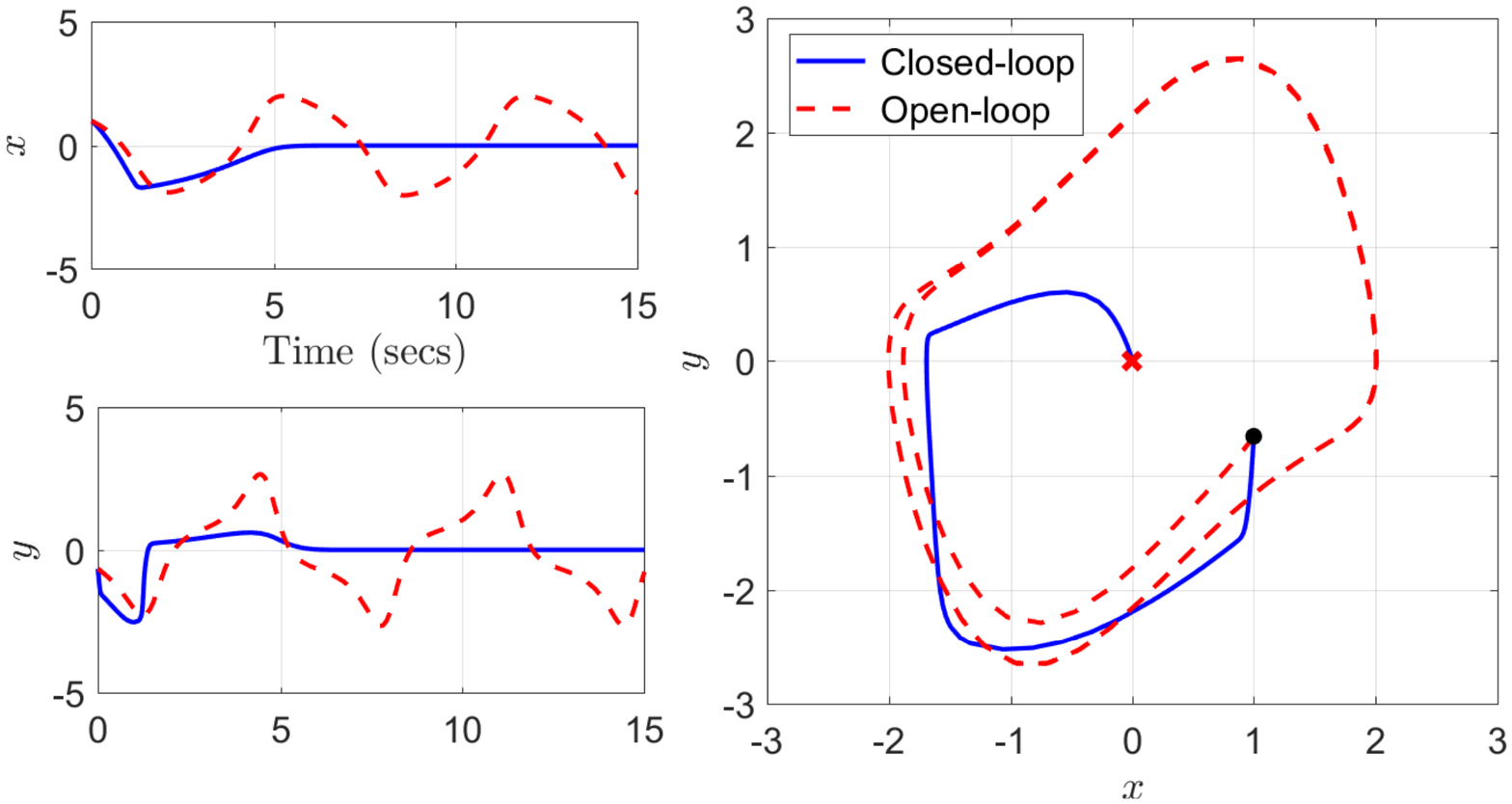}
\caption{Closed-loop and open-loop trajectories for the Van der Pol oscillator with $u$ in (\ref{control1})}
\label{fig:B-xy}
\end{figure}

\begin{figure}[tbp]
\centering
\includegraphics[width=\linewidth]{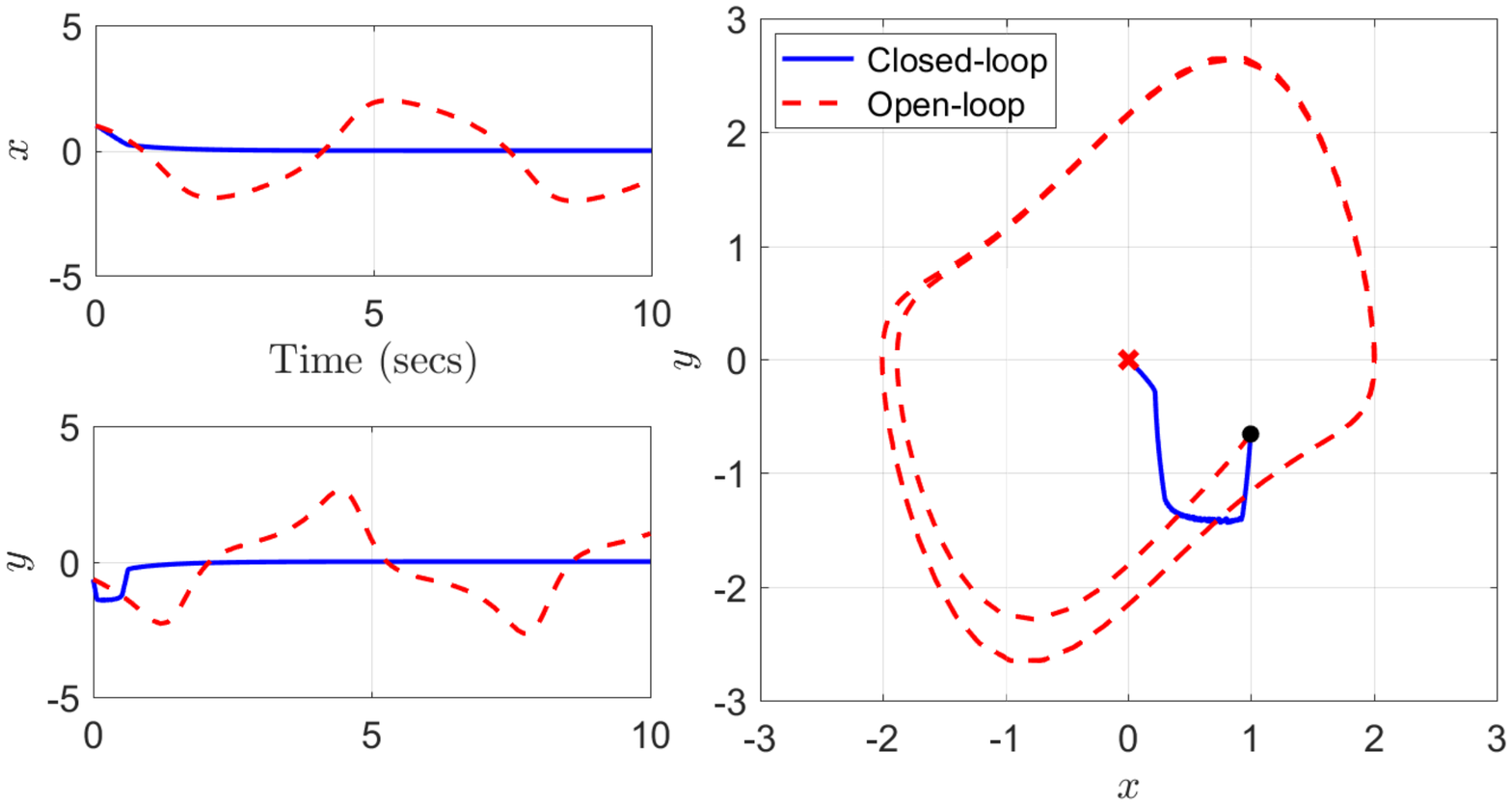}
\caption{Closed-loop and open-loop trajectories for the Van der Pol oscillator with $u$ in (\ref{mod_sontag})}
 \label{fig:B-xy2}
\end{figure}


\begin{figure}[tbp]
\centering 
\includegraphics[width=\linewidth]{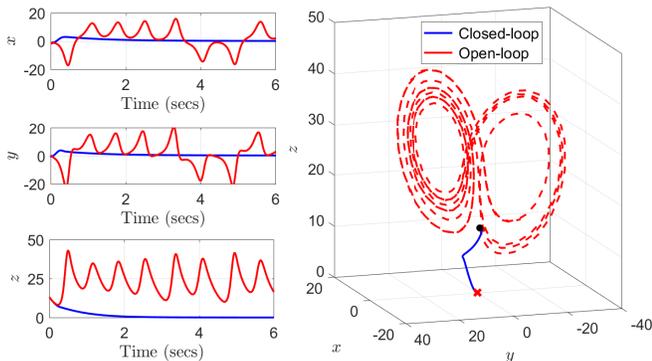}
\caption{Closed-loop and open-loop trajectories for the Lorenz attractor with controller $u$ in (\ref{control1})}
 \label{fig:E-xyz} \vspace{-6pt}
\end{figure}

\underline{\it Lorenz attractor}

The control dynamics for the Lorentz system is given by 
\begin{eqnarray}\label{L}
\dot{x}_1 &=& \sigma(x_2-x_1)\\\nonumber
\dot{x}_2 &=& x_1(\rho-x_3)-x_2+u\\\nonumber
\dot{x}_3 &=& x_1x_2 - \beta x_3
\end{eqnarray}
where $x\in\mathbb{R}^3$ and $u\in\mathbb{R}$ is the single input. When $\rho=28$, $\sigma=10$, $\beta=\frac{8}{3}$, the Lorenz system has chaotic solutions, which means almost all initial points tend to an invariant set except the origin. In this 3D example, the monomial of most degree $5$ is used as the dictionary functions. It is a more challenging problem to stabilize a 3D system to the origin with control $u$ in (\ref{control1}).

For the approximation of Koopman operator and eigenfunctions, the time-series data and time interval are chosen as $T_{final} = 5$ and $\Delta t= 0.001$.
For the closed-loop simulation, we randomly initialize the points within $[-5,5]\times[-5,5]\times[0,20]$ and solve the closed-loop system with \texttt{ode15s} solver in \textbf{MATLAB}. 
In Fig.~\ref{fig:E-xyz}, the open-loop trajectories tend to the invariant set of Lorenz attractor without control. It is shown that all the closed-loop trajectories with the controller are converging to the origin within 2 seconds, which means the controlled system is stabilized perfectly.
\vspace{-0.05in}
\section{Conclusion}
In this paper, we provided a systematic approach for the design of stabilizing feedback control for nonlinear systems. The proposed systematic approach relied on a bilinear representation of the nonlinear control system in the Koopman eigenfunction space. The stabilization problem for the bilinear control system was solved using the control Lyapunov function approach. A convex optimization-based formulation was proposed for the search of quadratic CLFs for the bilinear system. Simulation results were also presented and verify the main findings of the paper. 
\bibliographystyle{ieeetr}
\bibliography{ref,ref1}

\end{document}